\newtheorem{theorem}{Theorem}[section]
\newtheorem*{theorem*}{Theorem}
\newtheorem{claim}[theorem]{Claim}
\newtheorem{corollary}[theorem]{Corollary}
\theoremstyle{definition}
\newtheorem{problem}[theorem]{Problem}
\theoremstyle{remark}
\newtheorem{remark}[theorem]{Remark}
\numberwithin{equation}{section}
\DeclareMathOperator{\Res}{Res}
\DeclareMathOperator{\inte}{int}
\renewcommand{\epsilon}{\varepsilon}
\begin{document}

\title{Residues and the Combinatorial Nullstellensatz}

\author{Roman~Karasev}

\address{Dept. of Mathematics, Moscow Institute of Physics and Technology, Institutskiy per. 9, Dolgoprudny, Russia 141700}
\address{Institute for Information Transmission Problems RAS, Bolshoy Karetny per. 19, Moscow, Russia 127994}

\email{r\_n\_karasev@mail.ru}
\urladdr{http://www.rkarasev.ru/en/}

\thanks{Supported by the Dynasty Foundation.}




\keywords{multidimensional residues, Combinatorial Nullstellensatz, the Cayley--Bacharach theorem}
\subjclass[2010]{05E99, 14M25, 52C35}

\begin{abstract}
We interpret the Combinatorial Nullstellensatz of Noga~Alon as a multidimensional residue formula, describe some consequences of this interpretation and related open problems.
\end{abstract}

\maketitle

\section{Introduction}

The Combinatorial Nullstellensatz of Noga~Alon~\cite{alon1999} turned out to be an efficient tool to obtain results in combinatorics and discrete geometry. This is an almost elementary algebraic statement, but it has not so elementary consequences and generalizations.

In the recent papers~\cite{lason2010,karpet2011} a version of the Combinatorial Nullstellensatz was expressed as a certain formula, which turned out to be useful in several problems (see~\cite{kn2012,klw2012}, for example):

\begin{theorem}[The Combinatorial Nullstellensatz]
\label{cn-prod}
Suppose a multivariate polynomial $f(x_1,x_2,\dots,x_n)$ over a field $\mathbb{F}$ has degree at most $c_1+c_2+\dots+c_n$, where $c_i$ are non-negative integers. Denote by $C$ the coefficient at $x_1^{c_1}\dots x_n^{c_n}$ in $f$. Let $A_1$, $A_2$, \ldots, $A_n$ be arbitrary subsets of $\mathbb{F}$ such that $|A_i|=c_i+1$ for any $i$. Put $\varphi_i(x)=\prod_{\alpha\in A_i}(x-\alpha)$. Then
\begin{equation}
\label{cn-eq}
C=\sum_{\alpha_i\in A_i} \frac{f(\alpha_1,\dots,\alpha_n)}
{\varphi_1'(\alpha_1)\dots \varphi_n'(\alpha_n)}.
\end{equation}
In particular, if $C\ne 0$, then there exists a system of representatives $\alpha_i\in A_i$ such that $f(\alpha_1,\alpha_2,\dots,\alpha_n)\ne 0$.
\end{theorem}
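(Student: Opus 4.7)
The plan is to exploit linearity in $f$ to reduce to the case of monomials, then factorize the right-hand side and apply a one-variable Lagrange interpolation identity.

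First I would observe that equation~\eqref{cn-eq} is linear in $f$, so it suffices to verify it when $f = x_1^{a_1}\cdots x_n^{a_n}$ is a single monomial of total degree $a_1 + \cdots + a_n \leq c_1 + \cdots + c_n$. For such $f$, the coefficient $C$ equals $1$ if $a_i = c_i$ for all $i$ and $0$ otherwise, while the right-hand side factorizes as
\begin{equation*}
\prod_{i=1}^n \left( \sum_{\alpha \in A_i} \frac{\alpha^{a_i}}{\varphi_i'(\alpha)} \right),
\end{equation*}
reducing the problem to a one-variable statement in each factor.

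The key one-variable identity I would establish is $\sum_{\alpha \in A_i} \alpha^k/\varphi_i'(\alpha) = \delta_{k, c_i}$ for $0 \leq k \leq c_i$. This follows from Lagrange interpolation: the polynomial $x^k$, having degree at most $c_i = |A_i|-1$, admits the representation $x^k = \sum_{\alpha \in A_i} \alpha^k\, \varphi_i(x)/[\varphi_i'(\alpha)(x-\alpha)]$ on the $c_i+1$ nodes, and equating the coefficient of $x^{c_i}$ on both sides (each $\varphi_i(x)/(x-\alpha)$ being monic of degree $c_i$) gives the claim.

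Combining these, if $a_i \leq c_i$ for every $i$, the product equals $\prod_i \delta_{a_i, c_i}$, which is $1$ precisely when $a_i = c_i$ for all $i$ and $0$ otherwise, in agreement with $C$. If instead $a_j > c_j$ for some $j$, the degree constraint $\sum_i a_i \leq \sum_i c_i$ forces $a_{j'} < c_{j'}$ for some other index $j'$, so the $j'$-th factor vanishes by the identity above and the entire product is $0$, again matching $C=0$. The final ``in particular'' assertion is then immediate: if $C \neq 0$, the sum in~\eqref{cn-eq} is nonzero, so at least one summand, and hence at least one value $f(\alpha_1,\ldots,\alpha_n)$, must be nonzero.

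The only subtlety worth flagging, where a careless argument might stumble, is the case $a_j > c_j$: the individual sum $\sum_\alpha \alpha^{a_j}/\varphi_j'(\alpha)$ need not vanish and should not be evaluated directly. The right move is to invoke the degree constraint to locate an index $j'$ whose factor is known to be zero by the Lagrange identity, which kills the whole product irrespective of what happens at index $j$.
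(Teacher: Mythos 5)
Your argument is correct, but it is not the route the paper takes. You reduce to monomials by linearity, factor the right-hand side of \eqref{cn-eq} into one-variable sums, and evaluate each factor via the Lagrange-interpolation identity $\sum_{\alpha\in A_i}\alpha^k/\varphi_i'(\alpha)=\delta_{k,c_i}$ for $0\le k\le c_i$; the degree hypothesis enters exactly once, to locate an index $j'$ with $a_{j'}<c_{j'}$ whenever some $a_j>c_j$. This is essentially the elementary interpolation proof of \cite{karpet2011}, which the paper cites but deliberately does not reproduce. The paper instead deduces the theorem from the residue theorem on $M=(\mathbb{CP}^1)^{\times n}$: it applies the global residue formula to $\omega=f\,dz_1\wedge\dots\wedge dz_n/\bigl(g_1(z_1)\cdots g_n(z_n)\bigr)$, identifies the sum of residues at the finite grid points with the right-hand side of \eqref{cn-eq}, computes the residue at $(\infty,\dots,\infty)$ to be $\pm C$ under the degree bound, and uses a cyclic rearrangement of the divisors so that the relevant intersection is exactly $(X_1\times\dots\times X_n)\cup\{(\infty,\dots,\infty)\}$. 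Your approach buys brevity and immediate validity over an arbitrary field $\mathbb{F}$ (the residue argument is carried out over $\mathbb{C}$ and would require an algebraic-closure or transfer step otherwise); the paper's approach buys the geometric picture that drives the rest of the note --- the toric generalization, the possibility of handling multisets, and the link to the Cayley--Bacharach theorem. Your observation that the sum factors over the coordinates is the combinatorial shadow of the fact that the residue of a product form at a product point factors, so the two proofs are ultimately computing the same quantity with different bookkeeping; and your closing remark about not evaluating $\sum_\alpha \alpha^{a_j}/\varphi_j'(\alpha)$ for $a_j>c_j$ is exactly the point made in the paper's own remark that only the condition ``$d_i<c_i$ for at least one $i$'' is needed, not the full degree bound.
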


The general way to apply this theorem, developed by Fedor Petrov in~\cite{karpet2011}, is as follows: Express a combinatorial statement in the from that a certain polynomial $f$ of appropriate degree attains a nonzero value on the product $A_1\times \dots \times A_n$. In order to prove this, by the theorem, we need to show that $C$ is nonzero. Then we try to modify the polynomial $f$ without changing $C$, usually it corresponds to a special choice of the parameters of the initial combinatorial problem, and obtain another polynomial $g$ such that the right hand side of (\ref{cn-eq}) contains one (or slightly more) summands that are easy to calculate.

In~\cite{karpet2011} a simple proof of this theorem was given, using the Lagrange interpolation formula, see the review~\cite{gassa2000} for more information about interpolation. 

The emphasis of this note is that this formula can be viewed, less elementary, as a multidimensional residue formula. In what follows we explain the meaning of this and try to show other situations when this point of view may be useful. In principle, this allows, with some care, to consider the case when the sets $A_i$ are multisets (sets with some multiplicities). We also show the relation between the Combinatorial Nullstellensatz and the old Cayley--Bacharach theorem about incidence of intersection of hypersufraces.

\section{Residues on the product of projective lines}

Let us interpret the Combinatorial Nullstellensatz (Theorem~\ref{cn-prod}) as a corollary of the residue theorem~\cite[Ch.~5, \S~1]{gh1978}: 

\begin{theorem}[The residue theorem]
\label{residue-theorem}
Let $D_1,\ldots, D_n$ be a set of divisors on a compact analytic $n$-dimensional manifold $M$, having a zero-dimensional intersection. Then for any holomorphic $\omega\in \Omega^n(M\setminus \bigcup_{i=1}^n D_i)$ we have:
$$
\sum_{x\in D_1\cap\dots\cap D_n} \Res_x \omega = 0.
$$
\end{theorem}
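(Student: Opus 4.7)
The plan is to reduce the statement to Stokes' theorem. First, by compactness of $M$ and the zero-dimensionality hypothesis, $D_1\cap\cdots\cap D_n$ is a finite set $\{p_1,\ldots,p_N\}$. Near each $p_j$ I would pick local holomorphic defining functions $f_1^{(j)},\ldots,f_n^{(j)}$ for $D_1,\ldots,D_n$; since the intersection is $0$-dimensional, these form a regular sequence at $p_j$. Recall that the local residue is defined by
\begin{equation*}
\Res_{p_j}\omega = (2\pi i)^{-n}\int_{\Gamma_j(\epsilon)}\omega,
\end{equation*}
where $\Gamma_j(\epsilon) = \{|f_i^{(j)}|=\epsilon \text{ for all } i\}$ is a small real $n$-torus inside $M\setminus\bigcup_i D_i$, oriented by $d\arg f_1^{(j)}\wedge\cdots\wedge d\arg f_n^{(j)}$.

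The key observation I would exploit is that $\omega$ is automatically $d$-closed on its domain: writing $d=\partial+\bar\partial$, the $\bar\partial$-part vanishes by holomorphicity of $\omega$, and $\partial\omega$ has bidegree $(n+1,0)$, which is zero on an $n$-complex-dimensional manifold. So if I can exhibit a real $(n+1)$-chain $K\subset M\setminus\bigcup_i D_i$ with $\partial K = \sum_j \Gamma_j(\epsilon)$, then Stokes' theorem would give
\begin{equation*}
(2\pi i)^n \sum_j \Res_{p_j}\omega = \int_{\partial K}\omega = \int_K d\omega = 0,
\end{equation*}
and the theorem follows. This reduces the problem to the purely topological assertion that the sum of the linking tori bounds in the complement of the divisors.

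To construct $K$, I would pick smooth nonnegative functions $\rho_i:M\to\mathbb{R}_{\geq 0}$ with $\rho_i^{-1}(0)=D_i$ that agree with $|f_i^{(j)}|^2$ in a neighborhood of each $p_j$ (available via partition of unity). The map $\rho=(\rho_1,\ldots,\rho_n):M\to [0,\infty)^n$ then collapses each $p_j$ to the origin and each $\Gamma_j(\epsilon)$ to the single point $(\epsilon^2,\ldots,\epsilon^2)$. For sufficiently small $\epsilon$ and a smoothly embedded path $\gamma\subset(0,\infty)^n$ starting at $(\epsilon^2,\ldots,\epsilon^2)$, Sard's theorem ensures that $\rho^{-1}(\gamma)$ is a smooth $(n+1)$-manifold with boundary lying inside $M\setminus\bigcup_i D_i$; one of its boundary components is $\sum_j \Gamma_j(\epsilon)$, and I would arrange for the remaining boundary to cancel by prolonging $\gamma$ toward a regular value outside the image of $\rho$ (or by a doubling trick).

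The hard part will be making this last construction rigorous, with orientations properly tracked; equivalently, proving that $\sum_j [\Gamma_j(\epsilon)] = 0$ in $H_n(M\setminus\bigcup_i D_i;\mathbb{Z})$. The cleanest complete treatments bypass the ad hoc construction above in favor of either the Griffiths--Harris iterated-tube/Stokes argument (peeling off one divisor at a time via the Poincar\'e residue and inducting on $n$) or the Coleff--Herrera residue current formalism; both approaches encode the essential point that the compactness of $M$ forces the collection of linking tori at the intersection points to collectively cobound.
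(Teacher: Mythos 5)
The paper itself offers no proof of this statement: Theorem~\ref{residue-theorem} is quoted as a classical fact with a citation to Griffiths--Harris \cite[Ch.~5, \S~1]{gh1978}, so there is no in-paper argument to compare against, only the cited source. Your overall strategy is the standard one and the right one: $\omega$ is $d$-closed on $M\setminus\bigcup_i D_i$ for exactly the reason you give (type $(n+1,0)$ forms vanish, $\bar\partial\omega=0$), so the theorem reduces to showing that the cycle $\sum_j\Gamma_j(\epsilon)$ bounds in $M\setminus\bigcup_i D_i$. That reduction is correct and is the same skeleton as the Griffiths--Harris proof.

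The gap is in the construction of the cobounding chain $K=\rho^{-1}(\gamma)$, and you acknowledge it yourself. Three points need real work there. First, $(\epsilon^2,\ldots,\epsilon^2)$ need not be a regular value of $\rho$: the hypothesis is only that $D_1\cap\dots\cap D_n$ is zero-dimensional, so the local defining functions $(f_1^{(j)},\ldots,f_n^{(j)})$ form a regular sequence but generally only a finite \emph{branched} cover near $p_j$, not a coordinate system; the set $\{|f_i^{(j)}|=\epsilon\ \forall i\}$ can meet the branch locus, may fail to be smooth, and need not be a single torus. One must perturb to a regular value $(\epsilon_1,\ldots,\epsilon_n)$ via Sard and check the residue integral is unaffected, and the definition of $\Res_{p_j}$ must be taken over the full (possibly disconnected, multiplicity-carrying) preimage cycle. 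Second, you assert but do not verify that $\rho^{-1}(\epsilon^2,\ldots,\epsilon^2)$ equals $\bigcup_j\Gamma_j(\epsilon)$ with no extra components; this is true for small $\epsilon$ by compactness of $M$ (a sequence of points with all $\rho_i\to 0$ accumulates on $D_1\cap\dots\cap D_n$), but it is a step, not a triviality. Third, the orientation of $\partial(\rho^{-1}(\gamma))$ must be matched against the residue orientation $d\arg f_1\wedge\dots\wedge d\arg f_n$ uniformly in $j$; the sign is global so the conclusion survives, but it has to be checked once in the local model. None of these is fatal --- the construction can be completed, and the alternative you name (the iterated tube/Stokes argument of Griffiths--Harris, peeling off one divisor at a time) is precisely what the paper's citation points to --- but as written the proposal is an honest sketch that stops short of proving the one nontrivial claim, namely that the linking tori collectively cobound.
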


\begin{remark}
Note that the value $\Res_x \omega$ actually depends on the set of divisors $(D_1,\ldots, D_n)$. In particular it changes sign if the divisors are permuted by an odd permutation. To keep the things clear, we restrict ourselves to ``geometric'' divisors, that is combinations of prime divisors with unit coefficients.
\end{remark}

\begin{remark}
The algebraic version of Theorem~\ref{residue-theorem} is valid for any algebraically closed field of coefficients, but let us restrict ourselves to $\mathbb C$ here.
\end{remark}

Now we deduce Theorem~\ref{cn-prod} from the residue theorem. Take the product of projective lines $M=\underbrace{\mathbb CP^1\times\dots\times \mathbb CP^1}_n$.  Consider a grid subset:
$$
X = X_1\times\dots \times X_n \subseteq \underbrace{\mathbb C\times\dots\times \mathbb C}_n,
$$
where $|X_i|=k_i$, and a polynomial $f\in \mathbb C[z_1,\ldots,z_n]$. Denote 
$$
g_i(z) = \prod_{x\in X_i} (z-x),
$$
and apply the residue theorem to the differential form
$$
\omega = \frac{f(z_1,\ldots, z_n)dz_1\wedge\dots\wedge dz_n}{g_1(z_1)\dots g_n(z_n)}.
$$
The singularities of this differential form are at sets 
$$
D_i = \{(z_1,\ldots, z_n)\in (\mathbb CP^1)^{\times n} : z_i\in X_i\ \text{or}\ z_i = \infty\},
$$
that we consider as divisors. The intersection of these divisors is 
$$
D_1\cap\dots\cap D_n = (X_1\cup\{\infty\})\times\dots\times (X_n\cup\{\infty\}),
$$
and applying the residue formula yields:
\begin{equation}
\label{res-form}
\sum_{(z_1,\ldots, z_n)\in D_1\cap\dots\cap D_n} \Res_{(z_1,\ldots, z_n)} \binom{\omega}{D_1 D_2 \dots D_n} = 0.
\end{equation}
The residue at $(\infty, \ldots, \infty)$ with respect to $t_1=\frac{1}{z_1}, \ldots, t_n=\frac{1}{z_n}$ is 
\begin{multline*}
\Res_{(\infty,\ldots,\infty)} \omega = (-1)^n\Res_{(0,\ldots, 0)} \frac{f(\frac{1}{t_1},\ldots, \frac{1}{t_n})dt_1\wedge\dots\wedge dt_n}{t_1^2g_1(\frac{1}{t_1})\dots t_n^2g_n(\frac{1}{t_n})} = \\
= (-1)^n\Res_{(0,\ldots, 0)} f\left(\frac{1}{t_1},\ldots, \frac{1}{t_n}\right)dt_1\wedge\dots\wedge dt_n \prod_{i=1}^n\left(t_i^{n_i-2}\prod_{x\in X_i} \frac{1}{1 - t_ix})\right),
\end{multline*}
if the total degree $\deg f \le \sum_{i=1}^n (k_i-1)$, then we have a simple formula
$$
\Res_{(\infty,\ldots,\infty)} \binom{\omega}{D_1 D_2 \dots D_n} = (-1)^n c_{k_1-1,\ldots, k_n-1},
$$
where $c_{k_1-1,\ldots, k_n-1}$ is a coefficient at $z_1^{k_1-1}\dots z_n^{k_n-1}$ in $f(z_1,\ldots, z_n)$.

The equation~\ref{res-form} would give the desired result (up to sign), but the intersection $D_1\cap\dots\cap D_n$ has points with some coordinates $\infty$, and some finite. Fortunately, this issue is resolved by considering ``rearranged'' divisors
$$
D'_i = \{(z_1,\ldots, z_n)\in (\mathbb CP^1)^{\times n} : z_i\in X_i\ \text{or}\ z_{i+1} = \infty\},
$$
where the indexes of $z_i$ are taken modulo $n$. Now the intersection of divisors becomes what we want:
$$
D'_1\cap\dots\cap D'_n = (X_1\times\dots\times X_n)\cup\{(\infty,\infty,\ldots,\infty)\},
$$
and the result follows. Because of the cyclic shift the residue at infinity becomes $-c_{k_1-1,\ldots, k_n-1}$ and the formula is correct.

\section{Observation on the residue formula for two sets of divisors}

The trick with rearranging the divisors may be replaced by the following version of the residue formula:

\begin{theorem}[Gelfond--Khovanskii, 2002]
\label{residue-theorem2}
Let $D_1,\ldots, D_n$ and $D'_1,\ldots, D'_n$ be two sets of divisors on a compact analytic $n$-dimensional manifold $M$, each having a zero-dimensional intersection. Assume $D_i\cap D'_i=\emptyset$ for every $i$ and put $Z = \bigcup_{i=1}^n D_i \cup \bigcup_{i=1}^n D'_i$. Then for any holomorphic $\omega\in \Omega^n(M\setminus Z)$ we have:
$$
\sum_{p\in D_1\cap\dots\cap D_n} \Res_p \omega = (-1)^n \sum_{q\in D'_1\cap\dots\cap D'_n} \Res_q \omega.
$$
Here the residues on the left are considered with respect to the set of divisors $(D_1,\ldots, D_n)$ and the residues on the right use the set of divisors $(D'_1,\ldots, D'_n)$.
\end{theorem}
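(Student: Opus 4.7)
The plan is to prove the identity by interpolating between the two divisor configurations and applying the original residue theorem (Theorem~\ref{residue-theorem}) at each step; conceptually this is a systematic version of the ``rearrangement of divisors'' trick used in Section~2. For $k = 0, 1, \ldots, n$ I would introduce the hybrid configuration $C_k = (D_1, \ldots, D_k, D'_{k+1}, \ldots, D'_n)$, its intersection $S_k = D_1 \cap \cdots \cap D_k \cap D'_{k+1} \cap \cdots \cap D'_n$, and the total residue $A_k = \sum_{p \in S_k} \Res_p^{C_k} \omega$. Since $C_0 = (D'_1, \ldots, D'_n)$ and $C_n = (D_1, \ldots, D_n)$, the desired identity reads $A_n = (-1)^n A_0$, and I would deduce it by telescoping once I establish $A_k + A_{k-1} = 0$ for each $k = 1, \ldots, n$.

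To prove each such step, the plan is to apply Theorem~\ref{residue-theorem} to the merged configuration
\[
C_k^{\#} = (D_1, \ldots, D_{k-1},\, D_k + D'_k,\, D'_{k+1}, \ldots, D'_n).
\]
The hypothesis $D_k \cap D'_k = \emptyset$ makes $D_k + D'_k$ a legitimate geometric divisor that locally coincides with $D_k$ near points of $D_k$ and with $D'_k$ near points of $D'_k$. Its intersection with the remaining divisors is the disjoint union $S_k \sqcup S_{k-1}$ (disjoint precisely because no point lies in both $D_k$ and $D'_k$), and at each such point the local pole structure of $\omega$ matches that prescribed by either $C_k$ or $C_{k-1}$. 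A short local computation (using that the Jacobian factor of $f_k g_k$ reduces at a zero of $f_k$ to $g_k(p)$ times the Jacobian of $f_k$, cancelling the corresponding factor in the numerator) shows that the residue with respect to $C_k^{\#}$ reduces pointwise to the corresponding residue with respect to $C_k$ or $C_{k-1}$. Consequently, Theorem~\ref{residue-theorem} applied to $C_k^{\#}$ delivers $A_k + A_{k-1} = 0$ directly.

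The main obstacle I foresee is ensuring that each intermediate intersection $S_k \sqcup S_{k-1}$ is zero-dimensional, as required by Theorem~\ref{residue-theorem} at every step. The stated hypotheses only guarantee this at the endpoints $k = 0$ and $k = n$; the ``mixed'' intermediate intersections could in principle acquire positive-dimensional components. In the grid-like setting of Section~2 the relevant mixed intersections are automatically zero-dimensional, so the argument goes through without further ado. In full generality one would need either an extra genericity assumption on the pairs $(D_i, D'_i)$, a small perturbation followed by a limit, or an enhanced residue theorem permitting excess intersection. This is the single delicate point in an otherwise formal argument.
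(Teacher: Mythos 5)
Your telescoping scheme has a genuine gap, and it is exactly the one you flag at the end: the intermediate intersections $S_k = D_1\cap\dots\cap D_k\cap D'_{k+1}\cap\dots\cap D'_n$ for $0<k<n$ are not guaranteed to be zero-dimensional by the hypotheses, which only control $S_0$ and $S_n$. Since each step $A_k+A_{k-1}=0$ requires applying Theorem~\ref{residue-theorem} to a configuration whose total intersection is $S_k\sqcup S_{k-1}$, the argument as written does not prove the stated theorem; it proves it only under an additional genericity assumption on all mixed intersections, or after a perturbation argument you do not carry out. Acknowledging an obstacle is not the same as overcoming it.

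The paper's proof avoids the problem entirely, and it is worth seeing why your ``systematization'' of the rearrangement trick loses its essential feature. The paper forms the single configuration $D''_i = D_i\cup D'_{i+1}$ with indices taken mod $n$ --- note the \emph{cyclic shift}: $D_i$ is merged with $D'_{i+1}$, not with $D'_i$. A point of $\bigcap_i D''_i$ must, for each $i$, lie in $D_i$ or in $D'_{i+1}$; if it ever opts for $D'_{i+1}$ at index $i$ but for $D_{i+1}$ at index $i+1$, it lies in $D'_{i+1}\cap D_{i+1}=\emptyset$. So the choice propagates around the cycle and must be globally consistent: the total intersection is exactly $(D_1\cap\dots\cap D_n)\cup(D'_1\cap\dots\cap D'_n)$, with \emph{no} mixed points, and this set is zero-dimensional by hypothesis. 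One application of Theorem~\ref{residue-theorem} then gives the identity, the factor $(-1)^n$ arising because the residues at points of $\bigcap_i D'_i$ are taken with respect to the cyclically permuted ordering $(D'_2,\dots,D'_n,D'_1)$, a permutation of sign $(-1)^{n-1}$. Your stepwise merging of $D_k$ with $D'_k$ (same index) is precisely what creates the mixed sets $S_k$ that the cyclic pairing is designed to annihilate. If you replace your $n$ hybrid configurations by the single shifted one, your local computation identifying the residues of the merged divisor with those of the constituent configurations is the right ingredient and the proof closes.
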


This theorem in the analytic case was established by Gelfond and Khovanskii in~\cite[Theorem~2]{gekh2002}. The algebraic analogue of this theorem for algebraically closed fields follows from the ordinary residue theorem (like Theorem~\ref{residue-theorem}) by the same rearranging trick: put $D''_i = D_i\cup D'_{i+1}$ (the indices understood mod $n$) and note that the points of the intersection $D''_1\cap\dots\cap D''_n$ split into two subsets $D_1\cap\dots\cap D_n$ and $D'_1\cap\dots\cap D'_n$.

In view of Theorem~\ref{residue-theorem2} the Combinatorial Nullstellensatz is easily obtained by taking $M=\underbrace{\mathbb CP^1\times\dots\times \mathbb CP^1}_n$,
$$
\omega = \frac{f(z_1,\ldots, z_n)dz_1\wedge\dots\wedge dz_n}{g_1(z_1)\dots g_n(z_n)},
$$
$$
D_i = \{(z_1,\ldots, z_n) : z_i \in X_i\}\quad\text{and}\quad D'_i=\{(z_1,\ldots, z_n) : z_i =\infty\}.
$$
Again, the sum of residues at finite points turns out to be equal up to sign to the residue at $(\infty,\ldots, \infty)$. 

\begin{remark}
Another observation is that the assumption that the degree of $f$ is at most $c_1+c_2+\dots+c_n$ in Theorem~\ref{cn-prod} is not actually needed. What is really needed is that besides the monomial $C z_1^{c_1}\dots z_n^{c_n}$ all other monomials $C' z_1^{d_1}\dots z_n^{d_n}$ of $f$ have $d_i<c_i$ for \emph{at least one} index $i$.
\end{remark}

\section{Toric version of the Combinatorial Nullstellensatz}

Continuing to follow the results of~\cite{gekh2002}, we observe that Theorem~\ref{cn-prod} can also be viewed as a very particular case of the toric residue formula in~\cite{gekh2002}. Let us show this in more detail. We are going to deal with Laurent polynomials $f\in \mathbb C[z_1,\ldots, z_n,z_1^{-1},\ldots, z_n^{-1}]$ and their Newton polytopes $N(f)\in\mathbb Z^n$, that is convex hulls of the degrees of nonzero monomials in $f$.

Take some $n$ Laurent polynomials $g_1,\ldots, g_n$, the Newton polynomial of their product $N(g_1\dots g_n)$ equals the Minkowski sum $N(g_1)+\dots + N(g_n)$. Following Gelfond and Khovanskii~\cite{gekh2002} the system $N(g_1), \ldots, N(g_n)$ is called \emph{unfolded} if in their Minkowski sum every face $F$ of positive codimension in its unique decomposition $F=F_1+\ldots+F_n$ into the sum of faces of the polytopes $N(g_1), \ldots, N(g_n)$ has at least one zero-dimensional $F_i$. This is a certain requirement of general position and it is easy to check that for polynomials $g_1(z_1),\ldots,g_n(z_n)$ each depending on its respective one variable (as in Theorem~\ref{cn-prod}) this assumption is satisfied.

\begin{theorem}[Gelfond--Khovanskii, 2002]
Consider a differential form 
$$
\omega = \frac{f}{g_1\dots g_n} dz_1\wedge\dots\wedge dz_n
$$
where the system of Newton polytopes $N(g_1), \ldots, N(g_n)$ is unfolded. Let $Z$ be the set of common zeros of $g_1,\ldots, g_n$ in $T={\mathbb C^*}^n$ and $V$ be the set of vertices of the polytope $N=N(g_1)+\dots + N(g_n)$, then 
\begin{equation}
\label{toric-res-eqn}
\sum_{z\in Z} \Res_z \omega = (-1)^n \sum_{v\in V} k_v \Res_v \omega.
\end{equation}
\end{theorem}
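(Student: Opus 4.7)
My plan is to reduce the toric residue formula to Theorem \ref{residue-theorem2} by compactifying $T=(\mathbb C^*)^n$ to the projective toric variety $X_N$ associated with the Minkowski sum polytope $N=N(g_1)+\dots+N(g_n)$. Each Laurent polynomial $g_i$ extends to a section of the line bundle on $X_N$ determined by $N(g_i)$, with zero divisor $\bar D_i$ equal to the closure of $\{g_i=0\}\cap T$. The form $\omega$ extends to a meromorphic $n$-form whose polar locus is contained in $\bar D_1\cup\dots\cup\bar D_n$ together with the toric boundary.

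The unfolded hypothesis supplies the second family of divisors. For every facet $F$ of $N$, the unique Minkowski decomposition $F=F_1+\dots+F_n$ has at least one $F_i$ equal to a single vertex of $N(g_i)$; pick, say, the smallest such index and call it $i(F)$. The restriction of $g_{i(F)}$ to the toric divisor $D_F$ is then, up to a nonzero scalar, the nowhere-vanishing monomial section corresponding to the chosen vertex of $N(g_{i(F)})$, whence $\bar D_{i(F)}\cap D_F=\emptyset$. I then set
$$
D'_i \;=\; \bigcup_{F:\,i(F)=i} D_F,
$$
so that $\bar D_i\cap D'_i=\emptyset$ for each $i$. The same remark shows that $\bar D_1\cap\dots\cap\bar D_n$ avoids every toric divisor $D_F$ (it is contained in $\bar D_{i(F)}$), and therefore $\bar D_1\cap\dots\cap\bar D_n=Z\subset T$.

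Since distinct facets carry distinct labels $i(F)$, any point of $D'_1\cap\dots\cap D'_n$ lies on $n$ pairwise distinct facet divisors $D_{F_1},\dots,D_{F_n}$, whose intersection is the orbit closure of $F_1\cap\dots\cap F_n$; this is zero-dimensional precisely when the common face is a vertex of $N$, so $D'_1\cap\dots\cap D'_n=\{p_v:v\in V\}$ as a set. The multiplicity $k_v$ at the fixed point $p_v$ will then be the local intersection number of $D'_1,\dots,D'_n$, computed as a weighted count of $n$-tuples $(F_1,\dots,F_n)$ of facets through $v$ with $i(F_i)=i$, the weights being determinants of the primitive facet normals at $v$. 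Applying Theorem \ref{residue-theorem2} to the families $\bar D_i$ and $D'_i$ yields
$$
\sum_{z\in Z}\Res_z\omega \;=\; (-1)^n\sum_{v\in V}k_v\Res_{p_v}\omega,
$$
which is the desired formula (\ref{toric-res-eqn}).

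The main obstacle I expect is identifying this combinatorial multiplicity with the Gelfond--Khovanskii weight $k_v$, and justifying the strict disjointness $\bar D_i\cap D'_i=\emptyset$ along the deeper toric strata, not merely on the open orbit of $D_F$. Both issues reduce to the behaviour of the decomposition $F=F_1+\dots+F_n$ under passage to subfaces: if $G\subset F$, the induced decomposition $G=G_1+\dots+G_n$ satisfies $G_i\subset F_i$, so whenever $F_{i(F)}$ is a vertex the restriction of $g_{i(F)}$ remains a nowhere-vanishing monomial on every stratum $D_G\subset D_F$. The local intersection number at $p_v$ can then be computed orbit-by-orbit using only the vertex data of $N$ and of its summands $N(g_i)$, and should match the Gelfond--Khovanskii definition of $k_v$.
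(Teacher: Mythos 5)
First, a point of comparison: the paper does not prove this theorem at all --- it is quoted from Gelfond--Khovanskii \cite{gekh2002} --- so there is no in-paper argument to measure your proposal against. Judged on its own, your strategy (compactify $T$ to the toric variety $X_N$ of the Minkowski sum, take $\bar D_i$ to be the closures of $\{g_i=0\}$, manufacture a second family $D'_i$ out of boundary divisors using the unfolded condition, and invoke Theorem~\ref{residue-theorem2}) is the right general route and close in spirit to the original. Your two set-theoretic verifications are sound: a boundary orbit $O_G$ meets no $\bar D_i$ with $G_i$ a vertex, so $\bar D_1\cap\dots\cap\bar D_n\subseteq T$; and if $n$ facets with pairwise distinct labels contained a common positive-dimensional face $G$, all $n$ summands $G_i$ would be vertices, forcing $\dim G=0$, so $D'_1\cap\dots\cap D'_n$ is indeed supported on torus-fixed points.

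Nevertheless there are genuine gaps, and they sit exactly where the content of the theorem lies. (1) Theorem~\ref{residue-theorem2} is stated for a compact analytic \emph{manifold}, while $X_N$ is in general singular; you must either pass to a toric resolution (after which the clean bijection between facets of $N$ and boundary divisors, and between vertices and fixed points, is destroyed and the residues on exceptional divisors have to be re-aggregated into the vertex terms) or prove a two-family residue theorem on singular toric varieties. (2) The labelling $F\mapsto i(F)$ is an arbitrary choice whenever several summands $F_i$ are vertices; the local intersection numbers of $(D'_1,\dots,D'_n)$ at $p_v$ a priori depend on that choice, and in degenerate situations some $D'_i$ can be empty, which would make your right-hand side vanish identically and the statement false. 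Showing that the resulting multiplicity is independent of the labelling and coincides with the canonical combinatorial coefficient $k_v$ attached to the decomposition $v=v_1+\dots+v_n$ is not a routine check --- it is essentially the theorem of \cite{gekh2002}, and you have deferred it. (3) Finally, $\Res_{p_v}\omega$ taken with respect to $(D'_1,\dots,D'_n)$ must be matched with the explicit description used in the paper (the constant term of a Laurent expansion of $z^v f/(z^{v-\mathbf{e}}g_1\dots g_n)$); at $p_v$ the form $\omega$ has poles along \emph{all} boundary divisors through $p_v$ as well as possibly along the $\bar D_i$, so this is not a simple point residue and needs its own computation. As a proof, the proposal is therefore a plausible outline with the decisive steps still open.
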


Definitely, this formula needs some explanations. The left hand side of (\ref{toric-res-eqn}) is the ordinary sum of residues over the ``finite'' points of the toric variety, that is points lying in $T$. The right hand side is the sum of residues in the ``infinite'' points of the compactification of $T$ that gives the toric variety. The coefficients $k_v$ are integers depending on the combinatorial structure of $N$ near its vertex $v$ and the value $\Res_v \omega$ is calculated explicitly as the constant term in the Laurent series expansion of $\frac{z_1^{v_1}\dots z_n^{v_n}}{g_1\dots g_n}$ multiplied by the Laurent polynomial $\frac{f}{z_1^{v_1-1}\dots z_n^{v_n-1}}$. Here it is convenient to denote $z_1^{v_1}\dots z_n^{v_n}$ by $z^v$ for $z=(z_1,\ldots, z_n)$ and $v=(v_1,\ldots, v_n)$; also denote by $\mathbf{e}=(1,\ldots, 1)\in \mathbb Z^n$ the all-unit vector.

Let us describe a particular case when everything has a very explicit form.  We make the important assumption: For every vertex $v\in V$ there exists an outer support halfspace $H$ to $N$ at $v$ such that $H\cap N = \{v\}$ and the polytope $N(f)+\mathbf{e}$ does not intersect $\inte H$ (in Theorem~\ref{cn-prod} this corresponds to the degree upper bound).

The set of vertices of $N$ thus splits into two parts $V=V_+\cup V_0$ depending on whether they are outside $N(f) + \mathbf{e}$ or on its boundary. It is easy to see that for $v\in V_+$ the value $\Res_v \omega$ is zero and for $v\in V_0$ it equals the coefficient at $z^{v-\mathbf{e}}$ in $f$, divided by the product of coefficients in $g_i$ at the monomials corresponding to the unique representation of $v$ as a sum of vertices of $N(g_1), \ldots, N(g_n)$. Finally we obtain:

\begin{corollary}
\label{toric-cn}
Under above assumptions $\sum_{z\in Z} \Res_z \omega$ equals a linear combination of the coefficients of $f$ at monomials $z^{v-\mathbf{e}}$ for $v\in V_0$ with integer coefficients $k_v$.
\end{corollary}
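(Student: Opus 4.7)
The plan is to apply the Gelfond--Khovanskii toric residue formula~(\ref{toric-res-eqn}) directly: $\sum_{z\in Z}\Res_z\omega=(-1)^n\sum_{v\in V}k_v\Res_v\omega$ with integer weights $k_v$. The problem thus reduces to showing that $\Res_v\omega=0$ for every $v\in V_+$ and that for $v\in V_0$ the residue $\Res_v\omega$ equals, up to a constant depending only on the $g_i$, the coefficient of $z^{v-\mathbf{e}}$ in $f$.

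To evaluate $\Res_v\omega$ at a vertex $v$, I would use the unfolded hypothesis to write $v=v_1+\cdots+v_n$ uniquely with $v_i$ a vertex of $N(g_i)$, and decompose $g_i=c_{i,v_i}z^{v_i}(1+h_i)$, where $h_i$ is a Laurent polynomial whose nonzero monomials $z^{w_i-v_i}$ have their exponents in the tangent cone $C_{i,v_i}$ of $N(g_i)$ at $v_i$. A geometric-series expansion then gives
\[
\frac{z^v}{g_1\cdots g_n}=\frac{1}{\prod_i c_{i,v_i}}\prod_{i=1}^n\frac{1}{1+h_i},
\]
a Laurent series supported on the tangent cone $C_v=C_{1,v_1}+\cdots+C_{n,v_n}$ of $N$ at $v$, with constant term $1/\prod_i c_{i,v_i}$. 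By the stated prescription for $\Res_v\omega$ as a constant term after multiplication by $f/z^{v-\mathbf{e}}$, a monomial $z^m$ of $f$ can contribute only if $v-m-\mathbf{e}\in C_v$.

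The main step, and the main obstacle, is to show that the halfspace hypothesis forces $m=v-\mathbf{e}$ to be the only such monomial. Choose $H$ with $H\cap N=\{v\}$ and $(N(f)+\mathbf{e})\cap\inte H=\emptyset$, and let $\xi$ be a normal vector to $\partial H$ pointing into $\inte H$. Since $N$ lies in the closed opposite halfspace, $C_v\subseteq\{\langle\xi,x\rangle\le 0\}$; since $m+\mathbf{e}\notin\inte H$, one has $\langle\xi,v-m-\mathbf{e}\rangle\ge 0$. Consequently $v-m-\mathbf{e}\in C_v$ forces this vector into the face $C_v\cap\{\langle\xi,\cdot\rangle=0\}$, which is the tangent cone at $v$ of $N\cap\partial H=\{v\}$ and therefore collapses to $\{0\}$. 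Hence $m=v-\mathbf{e}$, so $\Res_v\omega=0$ for $v\in V_+$ (where $v-\mathbf{e}\notin N(f)$), while $\Res_v\omega$ equals the coefficient of $z^{v-\mathbf{e}}$ in $f$ divided by $\prod_i c_{i,v_i}$ for $v\in V_0$. Substituting into~(\ref{toric-res-eqn}) yields the claimed linear combination, with the integer toric multiplicities $k_v$ as the essential combinatorial weights.
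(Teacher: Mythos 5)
Your argument is correct and follows the same route as the paper: apply the Gelfond--Khovanskii formula (\ref{toric-res-eqn}) and verify that the vertex residues vanish on $V_+$ and reduce on $V_0$ to the coefficient of $z^{v-\mathbf{e}}$ in $f$ divided by $\prod_i c_{i,v_i}$. The paper dismisses this vertex computation with ``it is easy to see''; your tangent-cone and supporting-halfspace argument is exactly the intended verification, correctly carried out.
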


\begin{remark}
In some cases one may guarantee that the coefficient $k_v$ for $v\in V_0$ is nonzero. For example, this is the case when exactly $n$ facets of $N$ meet at $v$. It is easy to check that this is the case in Theorem~\ref{cn-prod}.
\end{remark}

\begin{remark}
As it was already discussed, when all zeros in $Z$ are simple then on the left hand side of (\ref{toric-res-eqn}) we have a sum of values of $f$ in the points of $Z$ with certain nonzero coefficients.
\end{remark}

\begin{remark}
Corollary~\ref{toric-cn} formally requires the points of $Z$ to have only nonzero coordinates (they have to lie in $T$), but it is easy to see that Theorem~\ref{cn-prod} follows in its full generality by a translation of the sets $A_i$ so that they avoid zero.
\end{remark}

\section{Residues on $\mathbb CP^n$ and the Cayley--Bacharach theorem}

Another version of the proof for Combinatorial Nullstellensatz arises if we consider the form
$$
\omega = \frac{f(z_1,\ldots, z_n)dz_1\wedge\dots\wedge dz_n}{g_1(z_1)\dots g_n(z_n)}.
$$
over the projective space $\mathbb CP^n$. Compared to the previous section, this approach allows to make the results more flexible and independent on the Newton polynomials of $f$ and $g_i$.

Suppose first that $\deg f \le \sum_{i=1}^n k_i - n - 1$. In this case a simple calculation shows that $\omega$ has no singularity over the hyperplane at infinity, and we obtain the equality (the residues are with respect to the divisors corresponding to $g_1, \ldots, g_n$)
$$
\sum_{(z_1,\ldots, z_n)\in X_1\times\dots X_n} \Res_{(z_1,\ldots, z_n)} \omega = \sum_{(z_1,\ldots, z_n)\in X_1\times\dots X_n} \frac{f(z_1,\ldots, z_n)}{g'_1(z_1)\dots g'_n(z_n)} = 0,
$$
which leads to the Cayley--Bacharach theorem (see~\cite{bach1886,cayley1889} and the textbook~\cite[Ch.~5, \S~2]{gh1978}): If $f$ is zero at all but one points of $X$, then it should be zero at the remaining point. We give here the general statement of the Cayley--Bacharach theorem: 

\begin{theorem}[Cayley--Bacharach, XIXth century]
\label{cb}
If the system of equations
\begin{eqnarray*}
g_1(x) &=& 0\\
&\ldots&\\
g_n(x) &=& 0
\end{eqnarray*}
of degrees $k_1,\ldots, k_n$ has $k=k_1k_2\dots k_n$ isolated solutions $X=\{x_1,\ldots, x_k\}$, then there exists a linear dependence with nonzero coefficients:
\begin{equation}
\label{cb-rel}
\sum_{i=1}^k \alpha_i f(x_i) = 0
\end{equation}
between values of every polynomial of degree $\deg f \le \sum_{i=1}^n k_i - n - 1$. In particular, the polynomial should be zero on $X$ if and only if it is zero on all but one points of $X$.
\end{theorem}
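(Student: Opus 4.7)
The plan is to apply Theorem~\ref{residue-theorem} on $M=\mathbb{CP}^n$ to the differential form
\[
\omega=\frac{f(z_1,\ldots,z_n)\,dz_1\wedge\cdots\wedge dz_n}{g_1(z)\cdots g_n(z)},
\]
taking as divisors $D_i\subset\mathbb{CP}^n$ the projective closures of the affine hypersurfaces $\{g_i=0\}$. Since $\deg D_i=k_i$ and the hypotheses provide $k=k_1\cdots k_n$ isolated affine common zeros, B\'ezout's theorem forces $D_1\cap\cdots\cap D_n=X$ scheme-theoretically, with each $x_i$ a simple transverse intersection and no further common points on the hyperplane at infinity $\{Z_0=0\}$.

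The technical core of the argument is to verify that the degree hypothesis $\deg f\le \sum_i k_i-n-1$ makes $\omega$ extend holomorphically across $\{Z_0=0\}$. In a chart near infinity, say $Z_1=1$ with new coordinates $w_0=1/z_1$ and $w_j=z_j/z_1$ for $j\ge 2$, a routine substitution shows that $dz_1\wedge\cdots\wedge dz_n$ acquires a pole of order $n+1$ in $w_0$, while $g_i$ and $f$ contribute poles of orders $k_i$ and at most $\deg f$ respectively. The net order of pole of $\omega$ at infinity is
\[
n+1+\deg f-\sum_{i=1}^n k_i\ \le\ 0,
\]
so $\omega$ is indeed holomorphic on $M\setminus\bigcup D_i$.

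Theorem~\ref{residue-theorem} then produces the relation $\sum_{i=1}^k \Res_{x_i}\omega=0$. At a simple intersection point $x_i$ the Jacobian $J_g(x_i):=\det(\partial g_j/\partial z_l)(x_i)$ is nonzero, and changing coordinates locally to $(g_1,\ldots,g_n)$ yields the familiar evaluation
\[
\Res_{x_i}\omega=\frac{f(x_i)}{J_g(x_i)}.
\]
Setting $\alpha_i=1/J_g(x_i)$ then gives the claimed relation~(\ref{cb-rel}) with every coefficient nonzero. The final sentence of the theorem is immediate: if $f$ vanishes at $x_1,\ldots,x_{k-1}$, the remaining term $\alpha_k f(x_k)=0$ forces $f(x_k)=0$.

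The principal obstacle is the behavior at infinity: one must simultaneously check that (a) the projectivized hypersurfaces $D_i$ do not accumulate spurious common points on $\{Z_0=0\}$, a B\'ezout count that relies critically on the maximality $|X|=k_1\cdots k_n$; and (b) the form $\omega$ itself has no residue contribution from infinity, which is the content of the degree inequality $\deg f\le \sum k_i-n-1$. Both are essentially bookkeeping once one tracks how the volume form transforms in the chart at infinity, but conceptually this is exactly where the characteristic constant $-n-1$ of Cayley--Bacharach enters the statement.
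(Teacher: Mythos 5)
Your argument is correct and is essentially the paper's own route: apply the residue theorem on $\mathbb{CP}^n$ to $\omega=f\,dz_1\wedge\cdots\wedge dz_n/(g_1\cdots g_n)$, observe that the bound $\deg f\le\sum_i k_i-n-1$ cancels the order-$(n+1)$ pole of the volume form at the hyperplane at infinity, use the B\'ezout count to see that the $k$ isolated points are transverse and exhaust the intersection, and evaluate each local residue as $f(x_i)/J_g(x_i)$. The only slip is verbal, not mathematical: the factors $g_i$ sit in the denominator, so at infinity they contribute zeros of order $k_i$ rather than poles, which is exactly what your (correct) inequality $n+1+\deg f-\sum_i k_i\le 0$ records.
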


This theorem holds over arbitrary field if all the points of $X$ are defined over this field. Let us list some recent nontrivial uses of this theorem:

\begin{itemize}
\item 
An interesting application of the Cayley--Bacharach relations (\ref{cb-rel}) is distinguishing between nonnegative polynomials and sums of squares, see~\cite{ble2010} for further details.

\item 
The least nontrivial case of the Caylet--Bacharach theorem, for intersection of two triples of lines, was used in the recent paper~\cite{greentao2012} about Sylvester type problems.
\end{itemize}

It is curious that different particular cases of the Cayley--Bacharach theorem have their own names. For example, Miquel's six circle theorem~\cite{wikimiq} asserts that if $7$ out of $8$ vertices of a combinatorial cube $C$ in $\mathbb R^3$ lie on a quadratic surface $S$ then the remaining vertex of $C$ also must lie on $S$. Another particular case of the Cayley--Bacharach theorem is the result about cutting the integer points in a cube by hyperplanes (see~\cite[Theorem~6.3]{alon1999} and Problem~6 at \href{http://www.imo-official.org/}{IMO}~2007), which we state in a bit more general, than usual, form here:

\begin{corollary}
\label{cover-hyperpl}
Suppose we have $n$ families of hyperplanes $\mathcal H_1,\ldots,\mathcal H_n$ in $\mathbb CP^n$ with respective cardinalities $k_1,\ldots, k_n$. Define the intersection set
$$
X=\{H_1\cap\dots\cap H_n : H_1\in\mathcal H_1,\ldots, H_n\in\mathcal H_n\}
$$
and assume that it is discrete and has the maximum possible cardinality $k=k_1k_2\dots k_n$. If $x\in X$ is any point, then the set $X\setminus x$ cannot be covered by less than $\sum_{i=1}^n k_i - n$ hyperplanes that do not pass through $x$.
\end{corollary}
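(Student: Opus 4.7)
The plan is to apply the Cayley--Bacharach theorem (Theorem~\ref{cb}) to the hypothesized covering and derive a contradiction from the linear dependence it produces. For each family $\mathcal{H}_i$ form the polynomial $g_i$ whose zero set in $\mathbb{CP}^n$ is the union of the hyperplanes of $\mathcal{H}_i$; it has degree $k_i$, and the assumption that $|X|=k_1k_2\cdots k_n$ says exactly that the system $g_1=\dots=g_n=0$ has the maximum possible number of isolated solutions, namely the points of $X$. This puts us in the setting of Theorem~\ref{cb}.

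Now suppose, for contradiction, that $X\setminus \{x\}$ is covered by hyperplanes $L_1,\ldots, L_m$ with $m \le \sum_{i=1}^n k_i - n - 1$, none passing through $x$. I would pick an affine chart $\mathbb{C}^n \subset \mathbb{CP}^n$ whose hyperplane at infinity is sufficiently generic, so that it contains no point of $X$ and does not coincide with any hyperplane in $\mathcal{H}_1\cup\cdots\cup\mathcal{H}_n\cup\{L_1,\ldots,L_m\}$; since the list of forbidden hyperplanes is finite, such a chart exists. In this chart every $g_i$ remains a polynomial of degree $k_i$, every $L_j$ becomes an affine linear form $\ell_j$, and all $k$ points of $X$ are finite.

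Put $f=\ell_1\ell_2\cdots\ell_m$. Then $\deg f = m \le \sum_{i=1}^n k_i - n - 1$, $f$ vanishes on every point of $X\setminus\{x\}$, and $f(x)\ne 0$ because no $L_j$ passes through $x$. Theorem~\ref{cb} then yields coefficients $\alpha_1,\ldots,\alpha_k$, all nonzero, with $\sum_i \alpha_i f(x_i)=0$. All but the term corresponding to $x$ vanish, leaving $\alpha_x f(x)=0$, a contradiction.

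The only delicate point in the argument is the choice of chart: it must preserve the degree of every relevant polynomial and keep every point of $X$ finite, for otherwise the polynomial $f$ or some $g_i$ could drop in degree and the Cayley--Bacharach hypothesis or conclusion could break down. A generic hyperplane at infinity handles both requirements simultaneously, and once this setup is fixed the Cayley--Bacharach relation does all the remaining work in one line.
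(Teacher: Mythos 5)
Your proof is correct and follows essentially the same route as the paper, which presents this corollary as a direct consequence of the Cayley--Bacharach theorem (Theorem~\ref{cb}) by taking $f$ to be the product of the linear forms of the covering hyperplanes and invoking the ``zero on all but one point implies zero everywhere'' conclusion. Your extra care with the choice of affine chart is a reasonable bookkeeping detail that the paper leaves implicit.
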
 

Using the projective duality we obtain another consequence:

\begin{corollary}
Let $n$ finite point sets $X_1,\ldots, X_n\subset \mathbb CP^n$ have cardinalities $k_1,\ldots, k_n$. Assume that any system of representatives $x_i\in X_i$ defines a unique hyperplane $H(x_1,\ldots, x_n)$ containing $\{x_i\}_{i=1}^n$ and all these hyperplanes are distinct. Then one needs at least $\sum_{i=1}^n k_i - n$ points to pierce all such hyperplanes $H(x_1,\ldots, x_n)$ but one $H(x^0_1, \ldots, x^0_n)$ without touching this one.
\end{corollary}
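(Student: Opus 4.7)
The plan is to invoke projective duality and reduce the statement directly to Corollary~\ref{cover-hyperpl}. Recall that duality assigns to each point $x\in\mathbb{CP}^n$ a hyperplane $x^*\subset(\mathbb{CP}^n)^*$ and to each hyperplane $H$ a point $H^*\in(\mathbb{CP}^n)^*$, with the incidence $x\in H$ equivalent to $H^*\in x^*$.

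First I would dualize the data. Each set $X_i$ becomes a family $\mathcal{H}_i=\{x^* : x\in X_i\}$ of $k_i$ hyperplanes in $(\mathbb{CP}^n)^*$. For any system of representatives $x_i\in X_i$, the hyperplane $H(x_1,\ldots,x_n)$ satisfies $H(x_1,\ldots,x_n)^* = x_1^*\cap\cdots\cap x_n^*$, since a point $p$ lies on $H(x_1,\ldots,x_n)$ iff $p^*$ passes through each $x_i^*$, which is equivalent to $H(x_1,\ldots,x_n)^*$ being the common point of the $x_i^*$. The assumption that $H(x_1,\ldots,x_n)$ is uniquely defined and that distinct systems of representatives give distinct hyperplanes then says precisely that the intersection set $\{H_1\cap\cdots\cap H_n : H_i\in\mathcal{H}_i\}\subset(\mathbb{CP}^n)^*$ is discrete and has maximal cardinality $k_1k_2\cdots k_n$, which is exactly the hypothesis of Corollary~\ref{cover-hyperpl}.

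Next I would dualize the conclusion. A point $p\in\mathbb{CP}^n$ pierces a hyperplane $H$ iff $p\in H$, which dualizes to the hyperplane $p^*$ covering the point $H^*$. Hence a set of points piercing every hyperplane $H(x_1,\ldots,x_n)$ except $H(x_1^0,\ldots,x_n^0)$, and missing this latter one, dualizes to a set of hyperplanes in $(\mathbb{CP}^n)^*$ covering every point of the dual intersection set except $H(x_1^0,\ldots,x_n^0)^*$, while avoiding that point. Corollary~\ref{cover-hyperpl} then gives the lower bound $\sum_{i=1}^n k_i - n$ on the number of such hyperplanes, equivalently on the number of piercing points. The whole argument is duality bookkeeping; the only care needed is that each piece of the hypothesis and conclusion translates correctly, which presents no serious obstacle.
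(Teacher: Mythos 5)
Your proposal is correct and is exactly the paper's argument: the corollary is stated immediately after Corollary~\ref{cover-hyperpl} with the single remark ``Using the projective duality we obtain another consequence,'' and your write-up simply fills in the same duality bookkeeping explicitly. Nothing further is needed.
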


Now return to the original statement of the Combinatorial Nullstellensatz, where $\deg f = \sum_{i=1}^n k_i - n$. In this case $\omega$ has the singularity at the hyperplane at infinity, and we should include this hyperplane to a divisor in the definition of the residues. Finite singularity hyperplanes are 
$$
\mathcal H_i = \{H : H=\{z_i = x\},\ x\in X_i\}.
$$
The hyperplane at infinity can be added to the first family of hyperplanes $\mathcal H_1$ for example, to give $\mathcal H_1^*$. The corresponding set
$$
X^*=\{H_1\cap\dots\cap H_n : H_1\in\mathcal H_1^*,H_2\in\mathcal H_2,\ldots, H_n\in\mathcal H_n\}
$$
will contain all the points of $X$, and the point $x^*$ at the infinite direction of $(1,0,\ldots,0)$ axis. Note that the form $\omega$ has a bad singularity in $x^*$, and the residue formula is hard to apply at this point. But this can be corrected, if we perturb the families $\mathcal H_i$ ($i=2,\ldots, n$) so that the point $x^*$ becomes a set of $k_2\dots k_n$ points with simple singularities, lying on the hyperplane at infinity. For these points the formula can be proved by induction, by putting the sum of residues to the hyperplane at infinity and applying the inductive assumption.

This proof is good, but it is much longer than the original proof without residues. In order to justify this we may generalize the Combinatorial Nullstellensatz in some way, for example:

\begin{theorem}
Suppose we have $n$ hypersurfaces $S_1, \ldots, S_n\subset \mathbb C^n$ with respective degrees $k_1,\ldots, k_n$, and their equations have the form 
$$
g_i(z_1,\ldots, z_n) = z_i^{k_i} + \text{terms of less degree}.
$$
Assume that they intersect in a discrete set $X$ of cardinality $k=k_1k_2\dots k_n$. If a polynomial $f(z_1,\ldots, z_n)$ has degree $\le \sum_{i=1}^n k_i - n$ and a nonzero coefficient at $z_1^{k_1-1}\dots z_n^{k_n-1}$, then its zero set cannot contain $X$.
\end{theorem}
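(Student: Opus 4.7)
The plan is to apply the residue theorem (Theorem~\ref{residue-theorem}) on $M=\mathbb{CP}^n$ to the differential form
\[
\omega = \frac{f\,dz_1\wedge\cdots\wedge dz_n}{g_1\,g_2\cdots g_n},
\]
using the $n$ divisors $D_1=\overline{S_1}+H_\infty$ and $D_i=\overline{S_i}$ for $i\ge 2$, where $\overline{S_i}\subset\mathbb{CP}^n$ is the projective closure of $S_i$ and $H_\infty=\{w_0=0\}$ is the hyperplane at infinity. Homogenizing, $f\,dz_1\wedge\cdots\wedge dz_n$ has a pole of order $\deg f+n+1$ along $H_\infty$, while $1/(g_1\cdots g_n)$ has a zero of order $\sum k_i$ there, so under $\deg f\le\sum k_i-n$ the form $\omega$ has at most a simple pole on $H_\infty$ and the divisors $D_1,\ldots,D_n$ therefore cover every singularity of $\omega$.

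Next I would describe $D_1\cap\cdots\cap D_n$. Because the leading total-degree part of $g_i$ is $z_i^{k_i}$, the set $\overline{S_i}\cap H_\infty$ equals $\{w_i=0\}$ inside $H_\infty$, whence $H_\infty\cap\overline{S_2}\cap\cdots\cap\overline{S_n}=\{p_\infty\}$ for $p_\infty=[0:1:0:\cdots:0]$. B\'ezout caps the total intersection of $\overline{S_1},\ldots,\overline{S_n}$ at $k_1\cdots k_n$, and our hypothesis $|X|=k_1\cdots k_n$ forces every $x\in X$ to be a transverse intersection and rules out any further intersection at infinity. Hence $D_1\cap\cdots\cap D_n=X\cup\{p_\infty\}$, and at each $x\in X$ the residue is the standard simple-zero value $f(x)/J_g(x)$ with $J_g=\det(\partial g_i/\partial z_j)$.

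The central step is to compute $\Res_{p_\infty}\omega$. I would pass to the chart $y_1=1/z_1$, $y_i=z_i/z_1$ ($i\ge 2$) centered at $p_\infty$ and introduce $\tilde g_i(y)=y_1^{k_i}g_i(1/y_1,y_2/y_1,\ldots,y_n/y_1)$ and $\tilde f(y)=y_1^{\deg f}f(1/y_1,y_2/y_1,\ldots,y_n/y_1)$. The monicity hypothesis yields $\tilde g_1(0,y_2,\ldots,y_n)=1$ and $\tilde g_i(0,y_2,\ldots,y_n)=y_i^{k_i}$ for $i\ge 2$, i.e.\ $\tilde g_i=y_i^{k_i}+y_1 H_i(y)$ locally, and a short direct computation rewrites the form as
\[
\omega=-\frac{\tilde f(y)}{y_1\,\tilde g_1\,\tilde g_2\cdots\tilde g_n}\,dy_1\wedge\cdots\wedge dy_n.
\]
The main obstacle is that $p_\infty$ is a highly non-transverse intersection (of multiplicity $k_2\cdots k_n$), so the ``value divided by Jacobian'' recipe does not apply directly. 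I would bypass this with the Grothendieck transformation law for residues: the identities $\tilde g_i=H_i\cdot y_1+1\cdot y_i^{k_i}$ for $i\ge 2$ express $(y_1,\tilde g_2,\ldots,\tilde g_n)$ in terms of $(y_1,y_2^{k_2},\ldots,y_n^{k_n})$ by a lower-triangular matrix with $1$'s on the diagonal and hence determinant $1$, so the residue is unchanged. The resulting monomial residue is read off as the Taylor coefficient of $y_1^0 y_2^{k_2-1}\cdots y_n^{k_n-1}$ in $-\tilde f/\tilde g_1$, and using $\tilde g_1(0,\cdot)=1$ together with the identity $a_1=\deg f-\sum_{i\ge 2}(k_i-1)=k_1-1$ identifies it with $-C$. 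The residue theorem then gives
\[
\sum_{x\in X}\frac{f(x)}{J_g(x)}=C,
\]
so if $X$ lay in the zero set of $f$ the sum would vanish and $C$ would be forced to zero, contradicting the hypothesis.
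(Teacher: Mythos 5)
Your proof is correct and complete, and it actually goes further than the text above, which states this theorem without proof and, for the special case of the original Combinatorial Nullstellensatz, only sketches a strategy. The global setup is the same as the paper's: residues of $\omega$ on $\mathbb{CP}^n$ with the hyperplane at infinity adjoined to one of the $n$ divisors, the leading terms $z_i^{k_i}$ guaranteeing that the only extra intersection point is $p_\infty=[0:1:0:\cdots:0]$ and (via B\'ezout, which applies because the leading forms have no common zero on $H_\infty$, so that $\overline{S_1}\cap\cdots\cap\overline{S_n}=X$ is finite) that every $x\in X$ is a transverse, simple-residue point. The genuine difference is in how the degenerate point $p_\infty$, where the local intersection has multiplicity $k_2\cdots k_n$, is handled. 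The paper calls this a ``bad singularity'' and proposes to perturb the divisors $D_2,\ldots,D_n$ so that $p_\infty$ splits into $k_2\cdots k_n$ simple points on the hyperplane at infinity, finishing by induction on dimension; you instead compute the residue at $p_\infty$ in one stroke with the Grothendieck transformation law, using $\tilde g_i=H_i\cdot y_1+1\cdot y_i^{k_i}$ to pass unimodularly from $(y_1,\tilde g_2,\ldots,\tilde g_n)$ to the monomial system $(y_1,y_2^{k_2},\ldots,y_n^{k_n})$ and reading off the residue as a Taylor coefficient of $-\tilde f/\tilde g_1$. This buys a shorter proof with no perturbation and no induction, at the price of invoking the transformation law rather than staying within the simple-residue formalism. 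Two points you should make explicit: the hypothesis $C\neq 0$ forces $\deg f=\sum_i k_i-n$ exactly (otherwise the claim is vacuous), which is what your identity $a_1=k_1-1$ silently uses; and the simple-pole computation along $H_\infty$ is precisely what licenses writing $\omega$ locally at $p_\infty$ with holomorphic numerator $-\tilde f/\tilde g_1$ over the denominator $y_1\tilde g_2\cdots\tilde g_n$, so that the local residue with respect to $(D_1,\ldots,D_n)$ is defined there.
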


It seems that for arbitrary $g_i(z_1,\ldots, z_n)$ the condition ``coefficient at $z_1^{k_1-1}\dots z_n^{k_n-1}$ is nonzero'' should be replaced by some other condition, depending on the maximal degree parts of $f, g_1,\ldots, g_n$.

\section{Further similar problems}

The first question is: Does the two-dimensional case of Theorem~\ref{cover-hyperpl} admit a simpler proof? Its elementary statement reads as follows: 

\begin{problem}
Suppose $n$ red and $m$ blues lines in the plane have $nm$ points of transversal red-blue intersection, denote this intersection set by $X$. Prove that if a family of green lines covers all points of $X$ but one then there are at least $n+m-2$ green lines.
\end{problem}

Another question is related to some algebraic constructions of hypergraphs in~\cite{fr2011}. We believe that the residues may help to answer it, but cannot tell anything particular at the moment.

\begin{problem}
\label{grid-lines}
Suppose $n$ red and $n$ blue lines in the plane have $n^2$ points of transversal red-blue intersection, again denote this intersection set by $X$. Describe all cases when $X$ can be covered by $n$ green lines, distinct from the original blue and red lines.
\end{problem}

There are nontrivial examples for Problem~\ref{grid-lines}: In $\mathbb F_p\times \mathbb F_p$ we may consider all vertical lines red, all horizontal lines blue, and all lines with a fixed slope green. Here $n=p$ is the characteristic of the field.

Another example is: Let $U\subset\mathbb F^*$ be a finite multiplicative subgroup of order $n$, which necessarily coincides with the $n$-th roots of unity. Consider the blue lines $\{x-uy\}_{u\in U}$, the red lines $\{y = u\}_{u\in U}$, and the green lines $\{x=u\}_{u\in U}$. This is a valid configuration in Problem~\ref{grid-lines} and an important observation is that all three color families of lines are concurrent. 

Actually, the case of interest in~\cite{fr2011} is when $n<p$ (in $\mathbb F_p$) and the green lines form the (concurrent) family of vertical lines $x=0, x=1,\ldots, x = n-1$. In~\cite[Lemma~2.9]{fr2011} it is shown that no such configurations (with vertical green lines) exists for $n>3$ over the field $\mathbb R$, the proof using combinatorics of pseudolines. The case of finite characteristic with this selection of vertical lines is reduced to the real case (see~\cite{fr2011}) for $p>n^{4n}$ using the Dirichlet theorem on approximation by rational numbers.

We have a couple of observations on Problem~\ref{grid-lines}, with no use of residues, considering concurrent families of lines:

\begin{claim}
\label{grid-conc}
In terms of Problem~\ref{grid-lines}, let $r_i(x)=0$ be the equations of the red lines, let $b_i(x)=0$ and $g_i(x)=0$ be the equations of blues and green lines respectively. If all the green lines are concurrent then there is a linear dependence between the products $R(x) = \prod_i r_i(x)$, $B(x) = \prod_i b_i(x)$, and $G(x) = \prod_i g_i(x)$.
\end{claim}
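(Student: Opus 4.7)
The plan is purely algebraic: show that $G$ lies in the two-dimensional linear span of $R$ and $B$ in the space of degree-$n$ polynomials in two variables, which immediately yields the required nontrivial relation $\alpha R + \beta B - G = 0$.

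The key point is that $X$ is the complete intersection of the curves $\{R = 0\}$ and $\{B = 0\}$: both have degree $n$ and meet in exactly $n^2$ transversal points, so $R$ and $B$ share no common factor. I would then show by a direct induction that any degree-$n$ polynomial $P$ vanishing on $X$ lies in $\operatorname{span}(R, B)$. Restrict $P$ to the red line $\{r_1 = 0\}$ and parameterize it: $P|_{r_1}$ is univariate of degree $\le n$ and vanishes at the $n$ points $\{r_1 = 0\} \cap \{b_j = 0\}_j$, which are also the zeros of $B|_{r_1}$, so $P|_{r_1} = c_1 B|_{r_1}$ for some scalar $c_1$ and hence $r_1 \mid P - c_1 B$. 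Iterating the same argument on $r_2,\ldots, r_n$ --- at each stage the quotient's degree drops by one but must still vanish at $n$ points on the next red line, forcing another linear factor --- one obtains $P - c_1 B = c R$ for a scalar $c$, proving $P \in \operatorname{span}(R, B)$.

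Applied to $P = G$, which has degree $n$ and vanishes on $X$ (the greens cover $X$ by the Problem~\ref{grid-lines} setup), this yields $G = \alpha R + \beta B$, the asserted linear dependence. It is worth remarking that concurrency of the greens is not logically required for the \emph{existence} of the dependence --- it holds whenever $n$ green lines cover $X$. Concurrency enters instead as a refinement: translating the common point of the greens to the origin renders $G$ homogeneous of degree $n$, so the vanishing of all lower-degree terms in $\alpha R + \beta B$ then becomes a nontrivial constraint relating the red and blue configurations.

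The only delicate point is the bookkeeping in the inductive division, which rests squarely on the transversality of the $n^2$ red-blue intersections: at each step the residual quotient restricted to the next red line must genuinely have $n$ distinct zeros, so that a further linear factor can be peeled off. There is no conceptual obstruction; the argument is essentially the standard verification that a complete intersection of two degree-$n$ plane curves has Hilbert function determined by the degrees.
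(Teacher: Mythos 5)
Your argument is correct, but it takes a genuinely different route from the paper's. The paper exploits concurrency directly: it first observes that the common point $x_0$ of the green lines cannot lie in $X$, then chooses $(\alpha,\beta)\neq(0,0)$ so that $Z=\alpha R+\beta B$ vanishes at $x_0$; since $Z$ also vanishes on $X$, each green line $g_i$ carries $n+1$ zeros of the degree-$\le n$ polynomial $Z$ (the $n$ points of $X\cap g_i$ plus $x_0$), forcing $g_i\mid Z$ and hence $Z=\gamma G$. You instead prove the stronger statement that \emph{every} polynomial of degree $\le n$ vanishing on $X$ lies in $\operatorname{span}(R,B)$, by subtracting the right multiple of $B$ to kill the restriction to $r_1$ and then peeling off $r_2,\dots,r_n$ one at a time (at each later stage the quotient has degree $\le n-k$ but vanishes at $n$ points of the next red line, so it vanishes on that whole line); applying this to $P=G$ gives the dependence with the coefficient of $G$ equal to $1$, hence nontrivial. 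Your induction is sound: the transversality hypothesis guarantees each point of $X$ lies on exactly one red and one blue line, so the $n$ points on each red line are distinct and the divided quotients really do vanish where you need them to. What your approach buys is precisely the observation you make: concurrency of the green lines is irrelevant to the existence of the linear dependence, which holds for any $n$ lines of total degree $n$ covering $X$ --- this is the elementary grid case of the Cayley--Bacharach/AF+BG circle of ideas that the paper discusses elsewhere. What the paper's shorter argument buys is that it needs no division or induction at all, at the cost of genuinely using the concurrency hypothesis.
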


\begin{proof}
We denote by the same letter the straight line and its corresponding linear function. Let $x_0$ be the common point of the green lines. Note that on every line $g_i$ there must be at most $n$ points of $X$, because it meets at most $n$ red lines. Hence every $g_i$ contains exactly $n$ points of $X$ and these $n$-tuples are pairwise disjoint. Hence the common point $x_0$ cannot be in $X$.

Now choose coefficients $\alpha$ and $\beta$ so that $Z(x) = \alpha R(x) + \beta B(x)$ vanishes on $x_0$; it also vanishes on $X$. For every line $g_i$ the function $Z(x)$ vanishes on $g_i$ at $x_0$ and at $n$ intersection points $X\cap g_i$. Since $Z(x)$ has degree $\le n$ it must vanish on every $g_i$ and therefore it must be proportional to the product $G(x)$.
\end{proof}

\begin{claim}
\label{grid-conc2}
If we assume in Problem~\ref{grid-lines} that the red lines are concurrent and the green lines are concurrent, and also assume that $n$ is coprime with the characteristic of $\mathbb F$, then the example with roots of unity becomes unique up to projective transformation.
\end{claim}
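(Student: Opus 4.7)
The plan is to use both concurrency hypotheses to reduce the problem to a clean algebraic identity of the form $A(y)+C(x)=\prod_{i}(y-\omega_i x-\beta_i)$, and then to exploit the coprimality of $n$ with the characteristic to drive the parameters to the standard roots-of-unity values. First I apply Claim~\ref{grid-conc} (with green lines concurrent): its proof actually produces a relation $\alpha R+\beta B=\gamma G$ for certain constants, and none of $\alpha,\beta,\gamma$ can vanish, as otherwise two of the pairwise-coprime products $R,B,G$ would be proportional. Next I choose a projective change of coordinates sending the red concurrency point to the $x$-direction at infinity and the green concurrency point to the $y$-direction at infinity (these two points are necessarily distinct: if they coincided, $X$ would have to lie simultaneously on $n$ red lines and on $n$ green lines all through the same point, which is impossible since $X$ avoids that common point). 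In the new coordinates the red and green lines take the forms $\{y=r_i\}$ and $\{x=g_j\}$, so that $R$ depends only on $y$ and $G$ only on $x$, and the dependence becomes
$$B(x,y)=A(y)+C(x)$$
for single-variable polynomials $A,C$ of degree $n$.

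Next I inspect the degree-$n$ homogeneous part $p_n y^n+q_n x^n$ of $B$. Since the blue lines are distinct from the red and green ones, neither $p_n$ nor $q_n$ can vanish, so the slopes of the blue lines are the $n$ roots of $z^n=-q_n/p_n$. The hypothesis $\gcd(n,\mathrm{char}\,\mathbb F)=1$ guarantees these $n$ roots are distinct, and scaling $x$ (within the residual affine symmetry) normalises them to the $n$-th roots of unity $\omega_1,\dots,\omega_n$. After a harmless rescaling of $B$ one arrives at
$$\prod_{i=1}^n(y-\omega_i x-\beta_i)=A(y)+C(x).$$

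Substituting $y=\omega_j x+\beta_j$ on the right makes one factor vanish, so $A(\omega_j x+\beta_j)=-C(x)$ for every $j$. Comparing $j=1$ with an arbitrary $j$ via the change of variables $u=\omega_1 x+\beta_1$ yields $A(u)=A(\zeta_j u+\gamma_j)$, where $\zeta_j=\omega_j/\omega_1$ is itself an $n$-th root of unity and $\gamma_j=\beta_j-\zeta_j\beta_1$. Writing $A(y)=y^n+a_{n-1}y^{n-1}+\cdots$ and matching the coefficient of $u^{n-1}$, while using $\zeta_j^n=1$ and the invertibility of $n$, gives the explicit formula $\gamma_j=(\zeta_j-1)a_{n-1}/n$. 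Equivalently, $\beta_j+a_{n-1}/n=\zeta_j(\beta_1+a_{n-1}/n)$, so the vector $(\beta_j)_j$ is determined up to the $y$-translation $\beta_j\mapsto\beta_j-c$ and the $x$-translation $\beta_j\mapsto\beta_j-\omega_j b$. Absorbing both freedoms kills every $\beta_j$: the blue lines become $y=\omega_j x$, and then $A(y)=y^n+\const$ and $C(x)=-x^n+\const$. A final scaling of $x$ and $y$ puts the red lines at $y=\omega_k$ and the green lines at $x=\omega_k$, reproducing the roots-of-unity configuration.

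The delicate part is the leading-coefficient step and the subsequent $u^{n-1}$ matching: both use $\gcd(n,\mathrm{char}\,\mathbb F)=1$ in an essential way — the first so that $z^n-c$ genuinely has $n$ distinct roots (otherwise the blue slopes could collapse or be repeated), the second so that $n$ is invertible in $\mathbb F$ and the equation for $\gamma_j$ has the stated unique solution. The rest is routine coefficient-matching and bookkeeping of the residual affine freedom.
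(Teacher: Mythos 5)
Your proof is correct, but it takes a genuinely different route from the paper's. The paper normalizes the two concurrent families into the pencils $\{x=u\}_{u\in U}$ and $\{y=v\}_{v\in V}$ and treats each line of the third family as the graph of an affine bijection $U\to V$; the compositions $g_j^{-1}g_i$ are then affine self-bijections of the finite set $U$, which must fix the centroid $\frac{1}{n}\sum_{u\in U} u$ (this is where $\gcd(n,\mathrm{char}\,\mathbb F)=1$ enters there), hence after centering they are multiplications by $n$ distinct constants, forcing $U$ to be a multiplicative subgroup. You instead route everything through Claim~\ref{grid-conc}: the dependence $\alpha R+\beta B=\gamma G$ with all coefficients nonzero becomes $B=A(y)+C(x)$ after straightening the two pencils, and coefficient matching does the rest --- the top-degree form makes the blue slopes the $n$ distinct roots of $z^n=c$, and the $u^{n-1}$ coefficient of $A(u)=A(\zeta_j u+\gamma_j)$ pins down the intercepts; coprimality is used for the separability of $z^n-c$ and for inverting $n$. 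Your version is longer and more computational, but it makes the algebraic relation of Claim~\ref{grid-conc} do real work and exhibits the roots of unity directly as the blue slopes, whereas the paper's version is shorter and more structural, identifying the group without writing the polynomials. Two details you leave implicit but which do hold: no line of the configuration can be the line joining the two concurrency points (such a line would meet every line of one concurrent family only at that family's common point, which lies outside $X$), so the projective normalization sends no configuration line to infinity; and the final rescaling of the red and green intercepts to the $n$-th roots of unity uses that the constant terms of $A$ and $C$ sum to zero and are nonzero, which follows from $A(y)+C(x)=y^n-x^n$ together with the distinctness of the red lines.
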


\begin{proof}
After a projective transformation we assume that the red lines are $\{x = u\}_{u\in U}$ and the blue lines are $\{y = v\}_{v\in V}$. Then every green line $g_i$ is a graph of a linear bijection $U\to V$. Hence we have a set of linear transforms $g_j^{-1}g_i$ for the set $U$. These linear transforms must preserve the mass center $\frac{1}{n}(u_1+\dots + u_n)$ of $U$, and after another shift of the coordinates we assume that this mass center is zero and all the transforms $g_j^{-1}g_i$ are multiplications by a constant $c_{j i}$. Let us also rescale so that $U$ contains $1$. Then every $c_{j i}$ is contained in $U$, and since there must be at least $n$ distinct constants corresponding to $g_1^{-1}g_1, g_2^{-1}g_1, \ldots, g_n^{-1}g_1$ then we conclude that $U$ is a multiplicative subgroup and the transforms are multiplications by elements of this groups. After an appropriate shift and rescaling of the $y$ axis the set $V$ becomes equal to $U$.
\end{proof}

Finally we mention a problem from~\cite{gassa2000} related to the polynomial interpolation, which is in the spirit of the present discussion:

\begin{problem}
Suppose $X$ is a set of $\binom{n+2}{2}$ points in the plane such that for any $x\in X$ there exist $n$ lines covering $X\setminus \{x\}$ and not touching $x$. Describe such sets $X$ or, at least, prove that some $n+1$ points of $X$ lie on a single line.
\end{problem}

\bibliography{../bib/karasev}
\bibliographystyle{abbrv}
\end{document}